\documentclass[a4paper]{amsart}
\setlength{\vfuzz}{2mm} 
\setlength{\textwidth}{160mm}
\setlength{\textheight}{205mm} 
\setlength{\oddsidemargin}{0pt}
\setlength{\evensidemargin}{0pt}


\usepackage{amsfonts}
\usepackage{amssymb}
\usepackage{graphicx}    
\usepackage{multicol}    

\usepackage{amsmath}
\usepackage{tcolorbox}
\usepackage{mathrsfs}
\usepackage{amsthm}
\usepackage{lastpage}
\usepackage{fancyhdr}
\usepackage{accents}
\usepackage{stmaryrd}
\usepackage{tabularx}
\usepackage{tikz}
\usepackage{tikz-cd}
\usepackage{array}
\usepackage{mathtools}
\usepackage{comment}
\usepackage{subcaption}
\usepackage{adjustbox}
\DeclareFontFamily{OT1}{pzc}{}
\DeclareFontShape{OT1}{pzc}{m}{it}{<-> s * [1.10] pzcmi7t}{}
\DeclareMathAlphabet{\mathpzc}{OT1}{pzc}{m}{it}
\newtheorem{theorem}{Theorem}[section]
\newtheorem*{theorem*}{Theorem}
\newtheorem{lemma}[theorem]{Lemma}
\newtheorem{prop}[theorem]{Proposition}

\newtheorem{eg}[theorem]{Example}

\def\Z{\mathbb{Z}}    

\newtheorem{definition}{Definition}[section]
\def\K{\mathcal{K}}
\def\H{\tilde{H}}

\def\P{\mathcal{P}}

\def\ZZ{\mathcal{Z}}
\def\w{\omega}

\def\Z{\mathbb{Z}}
\def\L{\mathcal{L}}

\def\ker{\text{Ker }}

\newcommand{\gen}[1]{\langle#1\rangle}

\newtheorem{thm}{Theorem}[section]

\theoremstyle{definition}

\theoremstyle{definition}
\newtheorem{construction}[thm]{Construction}

\theoremstyle{definition}

\theoremstyle{remark}



\usepackage{etoolbox}
\makeatletter
\patchcmd{\@maketitle}
  {\ifx\@empty\@dedicatory}
  {\ifx\@empty\@date \else {\vskip3ex \centering\footnotesize\@date\par\vskip1ex}\fi
   \ifx\@empty\@dedicatory}
  {}{}
\patchcmd{\@adminfootnotes}
  {\ifx\@empty\@date\else \@footnotetext{\@setdate}\fi}
  {}{}{}
\makeatother

\providecommand{\customgenericname}{}
\newcommand{\newcustomtheorem}[2]{%
  \newenvironment{#1}[1]
  {%
   \renewcommand\customgenericname{#2}%
   \renewcommand\theinnercustomgeneric{##1}%
   \innercustomgeneric
  }
  {\endinnercustomgeneric}
}\newcustomtheorem{customprop}{Proposition}
\newcustomtheorem{customthm}{\textbf{Theorem}}


\begin{document}

\title{Double Homology and Wedge-Decomposable Simplicial Complexes}

\author{Donald Stanley, Carlos Gabriel Valenzuela Ruiz}

\date{\today}

\maketitle
\thispagestyle{empty}
\begin{abstract}
    We show a wedge-decomposable simplicial complex has associated double homology $\Z\oplus\Z$ in bidegrees $(0,0)$, $(-1,4)$.
\end{abstract}

\maketitle
\tableofcontents
\section{Introduction}\label{sec:introduction}
    \noindent\textbf{Homology of $\ZZ_\K$:} In toric topology, rather than studying simplicial complexes themselves a finer topological invariant is studied, that is the moment-angle complex $\ZZ_\K$ associated with $\K$. Given that $\K$ is a simplicial complex on $[m]$, $\ZZ_\K$ is a topological space given by a subspace of the polydisk $\left(D^{2}\right)^{m}$ that contains the combinatorial structure of $\K$ (see Section 2). It has a bigraded cell decomposition that induces bigraded homology groups $H_{-k,2l}(\ZZ_\K)$. This is studied extensively in Chapter 4 of \cite{ToricTop}, where they show these groups can be expressed as the sum of the reduced homology of all full subcomplexes of $\K$ (Hochster decomposition, see Theorem 3.1, Equation 3). \\

    \noindent\textbf{Double homology:} A new invariant called the \textit{bigraded double homology of a moment angle complex} was introduced in \cite{docoho}. This was designed to solve a stability problem with the bigraded persistent homology obtained by considering the homology groups of a moment-angle complex \cite{Stab}. Double homology $HH_{*,*}(\ZZ_\K)$ is defined by endowing a cochain complex structure to $H_*(\ZZ_\K)$ and computing its cohomology. There are three ways of obtaining this cochain complex. However, here we shall only consider the approach that uses the so-called \textit{Hochster decomposition}.\\

    \noindent\textbf{Results:} Given simplicial complexes $\K^1,\K^2$ with a common proper face $\sigma$, identifying them along it gives the face sum $\K^1\sqcup_\sigma\K^2$. If a simplicial complex $\K$ can be built in such a way, we say $\K$ is wedge-decomposable (see Section 5). Our main result computes the double homology of such sums (Theorem 5.4). 

\begin{theorem*}
    	If $\K^1$ and $\K^2$ have no ghost vertices then
 \[
	HH_{-k,2l}(\ZZ_{\K^1\sqcup_\sigma\K^2})=\left\{\begin{array}{cl}
		\Z & \textit{ for $(k,l)=(0,0)$ or $(-1,4)$}\\
		0 & \textit{ else}.
	\end{array}\right.\]
\end{theorem*}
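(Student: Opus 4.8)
The natural strategy is to reduce the double homology of the moment-angle complex $\ZZ_{\K^1\sqcup_\sigma\K^2}$ to combinatorial data about full subcomplexes via the Hochster decomposition, then analyze how the double differential interacts with the face-sum structure. First I would recall the Hochster decomposition $H_{-k,2l}(\ZZ_\K)\cong\bigoplus_{J\subseteq[m],\,|J|=l}\widetilde H_{l-k-1}(\K_J)$ and identify, in the combined vertex set $[m]=[m_1]\cup_\sigma[m_2]$, the full subcomplexes $(\K^1\sqcup_\sigma\K^2)_J$. The key observation is that for $J$ containing vertices from both sides but not entirely inside $\sigma$, the induced subcomplex is itself a face sum (or a wedge/union along $\sigma\cap J$), so a Mayer--Vietoris argument computes its reduced homology from those of $\K^1_{J\cap[m_1]}$ and $\K^2_{J\cap[m_2]}$. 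This should express the chain groups of the double complex as a direct sum of contributions indexed by whether $J$ lives entirely in $\K^1$, entirely in $\K^2$, or genuinely straddles the two pieces.

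Next I would set up the double differential explicitly on this decomposition. The differential in $HH_{*,*}$ raises/lowers the homological degrees in the prescribed way by summing over adding or deleting a single vertex; the point is that on the ``straddling'' summands the Mayer--Vietoris splitting is natural with respect to these vertex operations, so the double complex decomposes (up to a manageable correction) as a sum of a piece built from $\K^1$, a piece built from $\K^2$, and a connecting piece supported near $\sigma$. I expect that the contributions from $\K^1$ and $\K^2$ individually either cancel or telescope—this is where the hypothesis of \emph{no ghost vertices} enters, guaranteeing that each $\K^i$ contributes nontrivially only through its full vertex set and empty face, pinning down the surviving classes. The bidegree $(0,0)$ class is the obvious generator coming from $J=\varnothing$ (the augmentation/point), and the bidegree $(-1,4)$ class should emerge as the unique surviving cohomology class of the connecting piece, reflecting the ``extra'' $S^3$-type cell created by gluing two complexes along a common face.

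The main obstacle will be controlling the connecting (Mayer--Vietoris) summand precisely enough to see that its double homology is exactly $\Z$ in bidegree $(-1,4)$ and zero elsewhere—i.e., showing that all the a priori many contributions from subsets $J$ straddling $\sigma$ collapse to a single class. Concretely, I would filter the double complex by $|J\cap[m_1]|$ (or by the ``distance'' of $J$ from being contained in one side) and run the associated spectral sequence; the claim is that it degenerates quickly, with the $E_1$ or $E_2$ page already isolating the two generators. Verifying this degeneration, and checking that the connecting differential has the right rank so that nothing survives in, say, bidegrees like $(-1,2)$ or $(-2,6)$, is the computational heart of the argument. A useful sanity check along the way is the case $\sigma=\varnothing$, where $\K^1\sqcup_\varnothing\K^2$ is a disjoint union and the face-sum reduces to a wedge at the level of moment-angle complexes, for which the answer should be directly verifiable and match the stated bidegrees.
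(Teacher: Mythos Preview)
Your Mayer--Vietoris instinct is exactly right and is the content of the paper's Lemma~5.3: for each nonempty $J\subseteq[m]$ one gets a short exact sequence
\[
0 \longrightarrow \widetilde H_n(\K^1_J)\oplus\widetilde H_n(\K^2_J) \longrightarrow \widetilde H_n(\K_J) \longrightarrow \widetilde H_n(\L_J) \longrightarrow 0,
\]
and these assemble into a short exact sequence of $CH_n^*$ complexes, hence a long exact sequence in $HH$. So the architecture of your plan matches the paper. What is missing are the two moves that actually finish the argument, and your proposal misidentifies both.

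First, you have the role of the ghost-vertex hypothesis inverted. You write that ``no ghost vertices'' guarantees each $\K^i$ contributes nontrivially only through its full vertex set and empty face. The point is the opposite: once $\K^1$ and $\K^2$ are regarded as complexes on the \emph{common} vertex set $[m]$ of $\K$, each of them \emph{does} have ghost vertices (since $\sigma$ is a proper face of both, there is a vertex of $\K^2$ outside $V(\K^1)$ and vice versa). Theorem~4.2 then kills $HH_{*}(\ZZ_{\K^1})$ and $HH_{*}(\ZZ_{\K^2})$ outright, so the long exact sequence collapses to an isomorphism $HH_n^l(\ZZ_\K)\cong HH_n^l(\ZZ_\L)$. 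There is no telescoping or cancellation to engineer; the $\K^i$ pieces simply vanish.

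Second, your ``connecting piece supported near $\sigma$'' is left undetermined, and your proposed filtration by $|J\cap[m_1]|$ with an unspecified spectral-sequence degeneration is not a proof. In the paper the cokernel is identified concretely as $\L=\langle V(\K^1)\rangle\sqcup_\sigma\langle V(\K^2)\rangle$, a union of two simplices along $\sigma$; this $\L$ satisfies the hypotheses of the already-established Theorem~5.1 (attaching a simplex along a proper face) and is not itself a simplex, so $HH_{*}(\ZZ_\L)$ is exactly $\Z$ in bidegrees $(0,0)$ and $(-1,4)$. Without naming $\L$ and invoking that prior theorem, you have no mechanism for pinning down the $(-1,4)$ class or for excluding contributions in other bidegrees.
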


 This construction is analogous to that of the clique sum of two graphs. Given two graphs, the clique complex of a clique sum is precisely the face sum of the two clique complexes. Therefore this result gives a homological tool to detect graphs that are not clique sums. Further, Example \ref{counter} showcases a non-wedge-decomposable simplicial complex with associated $HH$ of the same form as the theorem, giving a complete answer to Question 8.2 in \cite{docoho}. A characterization of simplicial complexes with such double homology is unknown. \\

    \noindent\textbf{Organization:}
    \begin{itemize}
        \item Section 2 introduces some terminology.
    
        \item  In Section 3 we review the $HH_*$ construction.
    
        \item In Section 4 we compute the double homology of $\ZZ_\K$ when $\K$ has at least one ghost vertex. To do it we put a filtration on $CH_{*}$ isolating the action of adding the ghost vertex to $\K$.
    
        \item Section 5 gives a proof of the main theorem by obtaining a Mayer--Vietoris--like long exact sequence for $HH_{*}$.
    \end{itemize}

    \noindent\textbf{Acknowledgements:} Part of this work appears in Valenzuela's MSc.~thesis completed at the University of Regina under the supervision of Dr. Donald Stanley who is supported by NSERC RGPIN-05466-2020. We thank the anonymous reviewer for their helpful comments.

\section{Background and notation}

Let $S$ be a totally ordered finite set and let $\K\subseteq \P(S)$ with $\emptyset\in \K$. We say the pair $(\K,S)$ is a \textit{simplicial complex} if for all $\sigma\in \K$ and $\tau\subseteq \sigma$, $\tau\in \K$. We refer to $(\K,S)$ as the simplicial complex $\K$ on $S$, and the latter is referred to as the \textit{vertex set} of $\K$.\\

Elements in a simplicial complex are called \textit{faces}. Given a subset $F\subseteq \P(S)$ we define the \textit{simplicial complex generated by $F$} as the minimal simplicial complex on $S$ which contains $F$   and is denoted by $\gen{F}$.\\

For a simplicial complex $\K$ on $S$, we say $x\in S$ is a ghost vertex if $\{x\}\notin\K$. As we usually do not want to deal with those vertices, we define the \textit{effective vertex set} of $\K$ as the maximal subset of $S$ without ghost vertices of $\K$ and we denote it $V(\K)$.\\

Given a simplicial complex $\K$ on $S$ and $J\subset S$, we define its corresponding \textit{full subcomplex} $\K_J$ as the simplicial complex on $S$ given by $\K_J=\{\sigma\cap J:\sigma\in\K\}$.\\

For $m,n\in \Z$ we will denote $[n,m]:=\{k\in\Z:n\leq k\leq m\}$. For the sake of simplicity, we denote $[m]:=[1,m]$. As all vertex sets we use in this work are finite, we can assume they are of the form $[m]$ for some $m\in\Z$ unless otherwise stated.\\

Simplicial complexes correspond to  the classical \textit{geometric simplicial complexes} through a \textit{geometric realization} functor. Here sets of size $n+1$ correspond to $n$-dimensional simplices. When we talk about the homology of a simplicial complex we mean the simplicial homology of the underlying geometric simplicial complex. For more details on this one can refer to \cite{Hatcher} Section 2.1.\\

One of the most basic tools for computing the homology groups of a space is the Mayer--Vietoris long exact sequence.

\begin{theorem}[{{\cite[\S4.6]{Spanier}}}]
    Given two simplicial complexes $\K^1$, $\K^2$ on $[m]$, there is a natural long exact sequence in homology
    \[\begin{tikzcd}[ampersand replacement=\&]
	\cdots \& {H_{n+1}(\K^1\cup\K^2)} \& {H_n(\K^1\cap\K^2)} \& {H_n(\K^1)\oplus H_n(\K^2)} \& {H_n(\K^1\cup\K^2)} \& \cdots
	\arrow["\partial", from=1-2, to=1-3]
	\arrow["{i_*}", from=1-3, to=1-4]
	\arrow["{j_*}", from=1-4, to=1-5]
	\arrow[from=1-5, to=1-6]
	\arrow[from=1-1, to=1-2]
\end{tikzcd}\]
where $i_*(x)=(\overline{x},-\overline{x})$, and $j_*(a,b)=\overline{a}+\overline{b}$. Similarly, there is an analogous sequence in reduced homology
\[\begin{tikzcd}[ampersand replacement=\&]
	\cdots \& {\H_{n+1}(\K^1\cup\K^2)} \& {\H_n(\K^1\cap\K^2)} \& {\H_n(\K^1)\oplus H_n(\K^2)} \& {\H_n(\K^1\cup\K^2)} \& \cdots
	\arrow["\partial", from=1-2, to=1-3]
	\arrow["{i_*}", from=1-3, to=1-4]
	\arrow["{j_*}", from=1-4, to=1-5]
	\arrow[from=1-5, to=1-6]
	\arrow[from=1-1, to=1-2]
\end{tikzcd}\]
\end{theorem}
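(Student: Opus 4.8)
The plan is to derive the long exact sequence from a short exact sequence of simplicial chain complexes via the zig-zag (snake) lemma, which in the simplicial setting is considerably cleaner than the singular Mayer--Vietoris argument. First I would record the elementary combinatorial facts that $\K^1\cap\K^2$ and $\K^1\cup\K^2$ are again simplicial complexes on $[m]$, and that the simplicial chain group $C_n(\K)$ is the free abelian group on the $n$-faces of $\K$, with an inclusion of subcomplexes inducing the evident inclusion of chain groups (each oriented face mapping to itself, with sign fixed by the chosen order on $[m]$, as in the convention of Section 2).

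Next I would exhibit, in each degree $n$, the maps $i_n\colon C_n(\K^1\cap\K^2)\to C_n(\K^1)\oplus C_n(\K^2)$, $x\mapsto(x,-x)$, and $j_n\colon C_n(\K^1)\oplus C_n(\K^2)\to C_n(\K^1\cup\K^2)$, $(a,b)\mapsto\overline a+\overline b$, and verify that $0\to C_n(\K^1\cap\K^2)\xrightarrow{i_n}C_n(\K^1)\oplus C_n(\K^2)\xrightarrow{j_n}C_n(\K^1\cup\K^2)\to 0$ is exact. Injectivity of $i_n$ is immediate, and surjectivity of $j_n$ holds because every $n$-face of $\K^1\cup\K^2$ lies in $\K^1$ or in $\K^2$. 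The only substantive point is exactness in the middle: $j_ni_n=0$ is clear, while if $\overline a+\overline b=0$ then, comparing coefficients face-by-face (using freeness), $a=-b$, and the common support of $a$ and $b$ lies in both $\K^1$ and $\K^2$, hence in $\K^1\cap\K^2$; thus $(a,b)=i_n(a)$. Since $i$ and $j$ commute with the simplicial boundary, this assembles into a short exact sequence of chain complexes.

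I would then invoke the zig-zag lemma to obtain the long exact sequence in homology, with the induced maps $i_*$ and $j_*$ exactly as displayed. The connecting homomorphism $\partial$ is the standard one: represent a class in $H_n(\K^1\cup\K^2)$ by a cycle $z$, write $z=\overline a+\overline b$ with $a\in C_n(\K^1)$ and $b\in C_n(\K^2)$, observe that $\partial a=-\partial b$ is a cycle supported in $\K^1\cap\K^2$, and set $\partial[z]=[\partial a]$. Naturality is inherited from the naturality of the zig-zag lemma, once one checks that a simplicial map of pairs $(\K^1,\K^2)\to(\L^1,\L^2)$ induces chain maps commuting with $i$ and $j$.

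Finally, for the reduced statement I would repeat the argument with the augmented complexes $\widetilde C_*$, appending $\Z$ in degree $-1$ with the augmentation $\epsilon$ sending each vertex to $1$. The same formulas for $i$ and $j$ extend degreewise; in degree $-1$ they give the exact sequence $0\to\Z\to\Z\oplus\Z\to\Z\to 0$ with $1\mapsto(1,-1)$ and $(a,b)\mapsto a+b$, and $\epsilon$ is compatible with both maps, so the augmented sequence is again short exact and the zig-zag lemma yields the reduced version. I expect the middle-exactness check to be the only delicate step, and it is precisely where the simplicial structure pays off: a face of the union is literally a face of $\K^1$ or of $\K^2$, so no excision or barycentric subdivision is needed.
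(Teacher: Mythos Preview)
Your argument is correct: constructing the short exact sequence of simplicial chain complexes, verifying middle exactness face-by-face via freeness, and applying the zig-zag lemma is the standard and fully rigorous route, and your handling of the augmented complexes for the reduced version is fine (note that with the paper's convention $\emptyset\in\K$, the intersection always has $\widetilde C_{-1}=\Z$, so the degree~$-1$ piece behaves as you describe).

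There is, however, nothing to compare against: the paper does not supply its own proof of this theorem. It is stated as background with a citation to \cite[\S4.6]{Spanier} and used later (in the proof of Lemma~\ref{SES}) as a black box. So your proposal goes beyond what the paper does rather than reproducing or diverging from it.
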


\section{A review of the $HH_{*,*}$ construction}

Let $\K$ be a simplicial complex on $[m]$, its associated \textit{moment-angle complex} is given by the polyhedral product \[\ZZ_\K:=\left(D^2,S^1\right)^\K=\bigcup_{\sigma\in\K}\left(D^2,S^1\right)^\sigma\subseteq {\left(D^{2}\right)}^m\]
where $(D^2, S^1)^\sigma=\left(\prod\limits_{i\in \sigma} D^2\right)\times \left(\prod\limits_{i\in [m]\setminus\sigma} S^1\right)$ and the order of the product is given by the order in $[m]$. For more details and examples we refer to \cite{ToricTop}.\\

Given a simplicial complex $\K$ on $[m]$, its corresponding \textit{Stanley-Reisner ring} is given by \[\Z[\K]:=\Z[v_1,\ldots,v_m]/I_\K\] where $I_\K$ is the ideal generated by the monomials $\prod\limits_{i\in J}v_i$ for each $J\in \P([m])\setminus\K$. This is useful as it turns out that this ring describes the cohomology of $\ZZ_\K$ through the following theorem.

\begin{theorem}[{{\cite[\S~4.5]{ToricTop}}}]
    There are isomorphisms of bigraded commutative algebras
    \begin{align}
        H^{*}(\ZZ_\K)&\cong \text{\normalfont Tor }_{\Z[v_1,\ldots,v_m]}(\Z,\Z[\K])\\
                &\cong H(\Lambda[u_1,\ldots,u_m]\otimes \Z[\K],d)\\
                &\cong\bigoplus_{J\subseteq[m]}\H^*(\K_J),
    \end{align}
    where \text{\normalfont{bideg }}$u_i=(-1,2)$, \text{\normalfont{bideg }}$v_i=(0,2)$ and $d$ is given by $d(v_i)=0$ and $d(u_i)=v_i$.
\end{theorem}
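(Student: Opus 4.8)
The plan is to prove the three isomorphisms in turn: the first from the topology of the Borel construction, the second from the Koszul resolution, and the third (the Hochster decomposition) from the fine multigrading on the Koszul model.

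\textbf{The isomorphism $H^*(\ZZ_\K)\cong\mathrm{Tor}_{\Z[v_1,\dots,v_m]}(\Z,\Z[\K])$.} I would begin with the coordinatewise $T^m$-action on $(D^2)^m$, which restricts to $\ZZ_\K$. Forming the Borel construction $ET^m\times_{T^m}\ZZ_\K$ yields, up to homotopy, the Davis--Januszkiewicz space $(\mathbb{C}P^\infty,*)^\K$; a separate, easier computation --- induction over the faces of $\K$, using that $\mathbb{C}P^\infty=BT^1$ and that a polyhedral product is the colimit of products of the coordinate factors --- identifies its cohomology ring with the Stanley--Reisner ring $\Z[\K]$. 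The projection $ET^m\times_{T^m}\ZZ_\K\to BT^m$ is a fibration with fibre $\ZZ_\K$, so its Eilenberg--Moore spectral sequence has
\[
E_2=\mathrm{Tor}_{H^*(BT^m)}\bigl(H^*(ET^m\times_{T^m}\ZZ_\K),\,\Z\bigr)=\mathrm{Tor}_{\Z[v_1,\dots,v_m]}(\Z[\K],\Z)
\]
and converges to $H^*(\ZZ_\K)$. One then checks that it collapses at $E_2$ and has trivial extensions, so that $E_2\cong H^*(\ZZ_\K)$ as bigraded algebras; the cleanest route is to exploit the fine $\Z^m$-grading carried over from the torus action (preserved by all differentials) to see there is no room for higher differentials, equivalently to invoke formality of $ET^m\times_{T^m}\ZZ_\K$ over $BT^m$.

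\textbf{The Koszul model.} Over $R=\Z[v_1,\dots,v_m]$ the sequence $v_1,\dots,v_m$ is regular, so the Koszul complex $\Lambda[u_1,\dots,u_m]\otimes R$ with $du_i=v_i$ is a free resolution of $\Z$ by differential bigraded $R$-modules, with the stated bidegrees the ones making $d$ homogeneous of bidegree $(1,0)$. Applying $-\otimes_R\Z[\K]$ therefore computes $\mathrm{Tor}_R(\Z,\Z[\K])$ and returns precisely the differential bigraded algebra $(\Lambda[u_1,\dots,u_m]\otimes\Z[\K],d)$ with $d(u_i)=v_i$, $d(v_i)=0$; since a differential graded algebra resolution computes the algebra structure on $\mathrm{Tor}$, this is an isomorphism of bigraded algebras.

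\textbf{The Hochster decomposition.} Now I would refine the internal even grading of $(\Lambda[u]\otimes\Z[\K],d)$ to the full $\Z^m$-grading with $u_i$ and $v_i$ both in multidegree $e_i$; the differential preserves it, so cohomology splits over multidegrees $\mathbf a\in\N^m$. A monomial $\bigl(\prod_{i\in A}u_i\bigr)v^{\mathbf b}$ of multidegree $\mathbf a$ forces $\mathbf b=\mathbf 1_{\mathrm{supp}(\mathbf a)\setminus A}$ to be squarefree off $A$ with its $v$-part a face of $\K$; a short argument then shows the cohomology in multidegree $\mathbf a$ vanishes unless $\mathbf a=\mathbf 1_J$ for some $J\subseteq[m]$. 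For such $J$, sending the monomial $\bigl(\prod_{i\in J\setminus\tau}u_i\bigr)\bigl(\prod_{i\in\tau}v_i\bigr)$ (with $\tau\in\K$, $\tau\subseteq J$) to the cochain dual to the face $\tau$ identifies the multidegree-$J$ summand of the Koszul complex, up to a degree shift and sign, with the augmented reduced simplicial cochain complex of the full subcomplex $\K_J$; hence its cohomology is $\H^*(\K_J)$. Summing over all $J\subseteq[m]$ gives the isomorphism of bigraded groups, and one checks the monomial product --- which multiplies the summands for $J$ and $J'$ into the one for $J\cup J'$ when $J\cap J'=\emptyset$ and kills it otherwise --- corresponds to the canonical products on reduced cohomology of full subcomplexes, yielding the algebra statement.

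\textbf{Main obstacle.} The genuinely delicate point is the first step: the collapse of the Eilenberg--Moore spectral sequence and the triviality of its extension problem over $\Z$. Unlike the Cohen--Macaulay case one cannot in general split $\Z[\K]$ off a polynomial subring generated by a regular sequence, so the argument must go through the fine grading or through formality; an alternative is to prove the group-level statement directly from the Bahri--Bendersky--Cohen--Gitler stable splitting $\Sigma\ZZ_\K\simeq\bigvee_{J\subseteq[m]}\Sigma^2|\K_J|$ and then transport the ring structure from the Koszul model. The other place where care is needed is keeping the signs in the Hochster identification coherent across all $J$ simultaneously, so that the products genuinely match.
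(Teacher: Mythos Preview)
The paper does not prove this theorem: it is quoted as a known result from \cite[\S~4.5]{ToricTop} and used as a black box, so there is no ``paper's own proof'' to compare against. Your outline is essentially the standard argument one finds in that reference (Eilenberg--Moore for the Borel fibration, the Koszul resolution of $\Z$ over the polynomial ring, and the $\Z^m$-multigrading to extract the Hochster pieces), and you have correctly flagged the genuine subtlety, namely the integral collapse and extension problem for the Eilenberg--Moore spectral sequence. Since the paper only needs the additive statement in homology for its later constructions, nothing more is required here.
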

The description we care about the most is the last one, often called the \textit{Hochster decomposition}. The name comes from Hochster's theorem \cite{hochster} which describes $\text{Tor}_{\Z[v_1,\ldots,v_m]}(\Z,\Z[\K])$ as a sum of reduced cohomology of full subcomplexes.\\

\noindent The bigrading in (3) is induced by the isomorphism and is given by

\[H^n(\ZZ_\K)\cong \bigoplus_{-k+2l=n}H^{-k,2l}(\ZZ_\K)\hspace{3mm}\text{ where }\hspace{3mm} H^{-k,2l}(\ZZ_\K)\cong\bigoplus_{\substack{J\subseteq [m]\\|J|=l}} \H^{l-k-1}(\K_J). \]

\noindent For homology, there is an analogous description, given by 
\[H_n(\ZZ_\K)\cong \bigoplus_{-k+2l=n}H_{-k,2l}(\ZZ_\K)\hspace{3mm}\text{ where }\hspace{3mm}  H_{-k,2l}(\ZZ_\K)\cong\bigoplus_{\substack{J\subseteq [m]\\|J|=l}} \H_{l-k-1}(\K_J). \]
\noindent Moreover, this description is covariantly functorial with respect to inclusions. Notice that if $l<k$ then automatically $H_{-k,2l}(\ZZ_\K)=0$, so for the rest of this work, when talking about this decomposition we shall assume $l\geq k$.\\

Now we introduce the protagonist of this work. Let $\K$ be a simplicial complex on $[m]$, in \cite{docoho} the homology of $\ZZ_\K$ was endowed with a second differential $d$, giving it a cochain complex structure. The cohomology of this complex was called the double homology of $\ZZ_\K$ and is denoted by $HH_{*,*}(\ZZ_\K)$.\\

\begin{construction} Let $\K$ be a simplicial complex on some totally ordered set $S$. For each $k$, we define the bigraded cochain complex $CH_{*,*}(\ZZ_\K)$ by making
\[CH_{-k,2l}(\ZZ_\K)=\bigoplus_{\substack{J\subseteq S \\|J|=l}}\H_{l-k-1}(\K_J).\]
\noindent To construct the differential, for each $J\subseteq S$ and $p\in \Z$ consider the map $d_{p;J}:\H_p(\K_J)\to \bigoplus_{x\in S\setminus J} \H_p(\K_{J\cup\{x\}})$ given by
\begin{equation}\label{partiald}
    d_{p;J}=(-1)^{p+1}\sum_{x\in S\setminus J}\varepsilon(x,J)\phi_{p;J,x}
\end{equation}
where $\varepsilon(x,J)=(-1)^{|\{j\in J:j<x\}|}$ and $\phi_{p;J,x}:\H_p(\K_J)\to \H_p(\K_{J\cup\{x\}})$ is the map induced by the inclusion~$\K_J\xhookrightarrow{}\K_{J\cup\{x\}}$. Then the differential $d_{-k,2l}:CH_{-k,2l}(\ZZ_\K)\to CH_{-k-1,2l+2}(\ZZ_\K)$ is given by \[d_{-k,2l}=\sum_{\substack{J\subseteq S\\|J|=l}}d_{l-k-1;J}\] Hochster's decomposition induces a bigraded cochain complex structure on $H_{*}(\ZZ_\K)$, taking cohomology yields the \textit{bigraded double homology of $\ZZ_\K$}.\\

Notice that as the differential does not change the homological degree $p$, we can split the cochain complex in a direct sum of cochain complexes as follows

\[CH_{*}(\ZZ_\K)\cong\bigoplus_{n\in \Z} CH_n^*(\ZZ_\K) \hspace{0.5cm}\text{ where }\hspace{0.5cm} CH_{n}^l(\ZZ_\K):=\bigoplus_{\substack{J\subseteq S\\|J|=l}}\H_{n-1}(\K_J).\]
Let $HH_{n}^l(\ZZ_\K):=H^l(CH_{n}^*(\ZZ_\K))$, then $HH_{-k,2l}(\ZZ_\K)\cong HH_{l-k}^l(\ZZ_\K)$. Therefore computing $HH_{*}(\ZZ_\K)$ is equivalent to computing $HH_n^*(\ZZ_\K)$ individually for each integer $n$.\\

\noindent We can analogously construct the bigraded double cohomology of moment angle complexes, for more details on this and examples we refer to \cite{docoho}.
\end{construction}

\section{Computations}
When a simplicial complex has ghost vertices the description of $HH_{*}$ turns out to be extremely simple. We present two computations regarding this:
\begin{prop}\label{boo}
	For any simplicial complex $\K$ with vertex set $[m]$ and set of ghost vertices $G$, \[HH_{-k,0}(\ZZ_\K)\cong \left\{\begin{array}{cl}
		\Z & \text{ for } k=0\\
		0 &\text{ else }	
	\end{array}\right.\] whenever $G=\emptyset$, otherwise $HH_{-k,0}(\ZZ_\K)=0$ for all $k$.
\end{prop}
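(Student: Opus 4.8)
The plan is to reduce the whole computation to a single internal degree and then write down the relevant differential by hand. Using the identification $HH_{-k,0}(\ZZ_\K)\cong HH_{-k}^0(\ZZ_\K)=H^0\bigl(CH_{-k}^*(\ZZ_\K)\bigr)$ from the construction in Section~3, it suffices to compute $H^0(CH_n^*(\ZZ_\K))=\ker\bigl(d^0\colon CH_n^0\to CH_n^1\bigr)$ for every $n\in\Z$. Since $\emptyset\in\K$, the full subcomplex $\K_\emptyset$ equals $\{\emptyset\}$, whose realization is empty; with the standard augmented-chain-complex convention one has $\H_j(\{\emptyset\})\cong\Z$ for $j=-1$ and $0$ otherwise. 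Hence $CH_n^0(\ZZ_\K)=\H_{n-1}(\K_\emptyset)$ vanishes unless $n=0$, which already forces $HH_{-k,0}(\ZZ_\K)=0$ for all $k\neq 0$, regardless of $G$.

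It then remains to analyze $H^0(CH_0^*(\ZZ_\K))=\ker\bigl(d^0\colon CH_0^0\to CH_0^1\bigr)$, where $CH_0^0=\H_{-1}(\{\emptyset\})\cong\Z$ and $CH_0^1=\bigoplus_{x\in[m]}\H_{-1}(\K_{\{x\}})$. For a non-ghost vertex $x$ the complex $\K_{\{x\}}=\{\emptyset,\{x\}\}$ is a point, so $\H_{-1}(\K_{\{x\}})=0$; for a ghost vertex $x$ we get $\K_{\{x\}}=\{\emptyset\}$, so $\H_{-1}(\K_{\{x\}})\cong\Z$. Thus $CH_0^1(\ZZ_\K)\cong\Z^{|G|}$, indexed by the ghost vertices.

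Finally I would identify the map. By the construction, the $l=0$ component of the differential on $CH_0^*$ is $d_{-1;\emptyset}$, and in \eqref{partiald} with $p=-1$ both the global sign $(-1)^{p-1}$ and each $\varepsilon(x,\emptyset)$ equal $1$, so $d_{-1;\emptyset}=\sum_{x\in[m]}\phi_{-1;\emptyset,x}$, where $\phi_{-1;\emptyset,x}$ is induced by the inclusion $\K_\emptyset\hookrightarrow\K_{\{x\}}$. When $x$ is a ghost vertex this inclusion is the identity of $\{\emptyset\}$, hence $\phi_{-1;\emptyset,x}=\mathrm{id}_\Z$, while the other components land in $0$. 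So under $CH_0^1\cong\Z^{|G|}$ the map $d^0$ is the diagonal $\Z\to\Z^{|G|}$. If $G=\emptyset$ then $CH_0^1=0$, so $\ker d^0=\Z$ and $HH_{0,0}(\ZZ_\K)\cong\Z$; together with the first paragraph this is exactly the stated answer. If $G\neq\emptyset$ the diagonal is injective, so $\ker d^0=0$ and $HH_{-k,0}(\ZZ_\K)=0$ for all $k$.

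I do not expect a genuine obstacle here: the argument is a direct unwinding of the definitions. The only points that require care are bookkeeping conventions — that $\K_\emptyset=\{\emptyset\}$ carries the reduced homology of the ``$(-1)$-sphere'', that $n=0$ is the sole internal degree contributing to $CH_*^0$, and that the signs in \eqref{partiald} are trivial in this degree — after which everything reduces to the observation that the diagonal $\Z\to\Z^{|G|}$ is injective precisely when $G$ is non-empty.
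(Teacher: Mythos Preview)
Your proof is correct and follows essentially the same route as the paper: identify $CH_{*,0}$ as $\H_{*}(\K_\emptyset)$ to reduce to the single degree $k=0$, then compute $CH_{-1,2}\cong\Z^{|G|}$ and observe that the differential $\Z\to\Z^{|G|}$ is the diagonal, hence injective exactly when $G\neq\emptyset$. Your version is in fact a bit more careful than the paper's in verifying the sign conventions and in writing $\K_\emptyset=\{\emptyset\}$ rather than $\emptyset$, but the argument is the same.
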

\begin{proof}
	As $l=0$, the only subset of $[m]$ we run through in the Hochster decomposition is $\emptyset$, therefore \[CH_{-k,0}(\ZZ_\K)=\H_{-k-1}(\K_\emptyset)=\H_{-k-1}(\emptyset)=\left\{\begin{array}{cl}
		\Z & \text{ for } k=0\\
		0 &\text{ else.}	
	\end{array}\right.\]
    Therefore the only relevant case for computing the double homology is $k=0$. Assume $G\neq \emptyset$, it is clear that $\H_{-1}(\K_{\{x\}})\neq 0$ only when $x\in G$, therefore we have that \[CH_{-1,2}(\ZZ_\K)=\bigoplus_{x\in G} \Z\{x\}.\] But by definition, the differential maps $1\in CH_{0,0}(\ZZ_\K)$ to $\sum\limits_{x\in G}g_x\in CH_{-1,2}(\ZZ_\K)$ where $g_x$ is the generator cycle of $\H_{-1}(\K_{\{x\}})$; as this map is injective it follows that $HH_{0,0}(\ZZ_\K)=0$.\\

	On the other hand, if $G=\emptyset$ then for all $x\in [m]$ we have that $\K_{\{x\}}=\{\emptyset, \{x\}\}$ and so $H_{-1}(\K_{\{x\}})=0$ for all $x\in [m]$, therefore $H_{-1,2}(\ZZ_\K)=0$, letting us conclude that $HH_{0,0}(\ZZ_\K)\cong \ker{ (\Z\to 0)}\cong\Z$.
\end{proof}
\begin{theorem}\label{king boo}
	Let $\K$ be a simplicial complex on $[m]$. Then $HH_*(\ZZ_\K)=0$ if and only if $\K$ has at least one ghost vertex.
\end{theorem}
\begin{proof}
    From Proposition \ref{boo} we get that if $\K$ has no ghost vertices then $HH_*(\ZZ_\K)\neq 0$. Relabeling the vertex set we can consider $\K$ to be a simplicial complex on $[0,m]$ for some $m\in \Z$ and assume $0$ is a ghost vertex. Let $n\in \Z$, we define a weight function $w:\P [0,m]\to \Z_{\geq 0}$ to be $w(J)=\sum\limits_{j\in J}j$. The map $w$ induces the bounded decreasing filtration $(F^*_p)_{0\leq p\leq m(m+1)/2}$ on $CH_n^*(\ZZ_\K)$ given by
    \[F_p^l=\bigoplus_{\substack{J\subseteq [0,m]\\|J|=l\\w(J)\geq p}} \H_{n-1}(\K_J).\]
    As the differential in $CH_{n}^*(\ZZ_\K)$ never decreases the weight, this is a cochain complex filtration. The induced spectral sequence converges to the associated graded of $CH_{n}^*(\ZZ_\K)$ and has $E_0$ page given by
    \[E_0^{p,q}\cong \frac{F_p^{p+q}}{F_{p+1}^{p+q}}\cong \bigoplus_{\substack{J\subseteq [0,m]\\|J|=p+q\\w(J)=p}} \H_{n-1}(\K_J).\]
    As the only case where the differential does not move us to a higher filtration is adding the point $0$, the differential in the $E_0$ page $\partial_0^{p,q}: E_0^{p,q}\to E_0^{p,q+1}$ is given by
    \[\partial_0^{p,q}=(-1)^{n}\sum_{\substack{J\subseteq [m]\\|J|=p+q\\w(J)=p}} \varepsilon(0,J)\phi_{n-1;J,0}.\]
    However, as the inclusion $\K_J\xhookrightarrow{}\K_{J\cup\{0\}}$ is the identity, then each $\phi_{n-1;J,0}$ is an isomorphism, making $E_0$ acyclic. Therefore $E_\infty=E_1=0$, meaning that $HH_n^*(\ZZ_\K)=0$ for all $n\in \Z$ and so the result follows.
\end{proof}

\section{Main Result}
\begin{definition}
    We say an abstract simplicial complex $\K$ on $[m]$ is \textit{wedge-decomposable} if there are two simplicial complexes $\K^1$, $\K^2$ on $[m]$ such that $\K=\K^1\cup \K^2$ and $\K^1\cap\K^2=\gen{\sigma}$ where $\sigma$ is a possibly empty proper face of both $\K^1$ and $\K^2$. We denote this by $\K=\K^1\sqcup_\sigma \K^2$.
\end{definition}
It is worth noticing that in such a decomposition, both $\K^1$ and $\K^2$ are full subcomplexes of $\K$. The main result in this section is a generalization of the following theorem.

\begin{theorem}[{{\cite[Thm.~6.5]{docoho}}}]\label{theoremsimplex}
    Let $\K$ be a simplicial complex on $[m]$ with no ghost vertices, such that there exists a simplicial complex $\K'$ on $[m]$, $\tau\in \K$ and a possibly empty proper face $\sigma\subset \tau$ such that $\K=\K'\cup\gen{\tau}$ and $\K'\cap\gen{\tau}=\gen{\sigma}$. Then either $\K$ is a simplex or 
    \[HH_{-k,2l}(\ZZ_\K)\cong\left\{\begin{array}{cl}
         \Z&\text{ for }(k,l)=(0,0)\text{ or }(-1,4)\\ 
         0& \text{ else.} 
    \end{array}\right.\]
\end{theorem}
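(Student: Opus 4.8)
The strategy I would take is to compare $\K$ with $\K'$ directly inside the Hochster/$CH$ picture, exploiting that adjoining the simplex $\langle\tau\rangle$ barely changes the full subcomplexes, and that $\K'$ — regarded as a complex on $[m]$ — is forced to have ghost vertices, so its double homology vanishes by Theorem~\ref{king boo}. First dispose of the degenerate case: if $\tau=[m]$ then $[m]\in\langle\tau\rangle\subseteq\K$, so $\K=\gen{[m]}$ is a simplex; conversely, if $\K=\gen{\rho}$ then $\rho=V(\K)=[m]$, and $[m]\in\K=\K'\cup\langle\tau\rangle$ forces $\tau=[m]$ (it cannot lie in $\K'$, else $\K'=\gen{[m]}$ and $\K'\cap\langle\tau\rangle=\langle\tau\rangle\ne\gen{\sigma}$). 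So assume $\tau\subsetneq[m]$, i.e.\ $\K$ is not a simplex, and set $A:=[m]\setminus\tau$, $B:=\tau\setminus\sigma$; both are non-empty. Since $\K'\cap\langle\tau\rangle=\gen{\sigma}$, a vertex $v\in\tau$ is a vertex of $\K'$ only when $v\in\sigma$; hence the effective vertex set of $\K'$ is $A\cup\sigma$, the set $B$ consists of ghost vertices of $\K'$ on $[m]$, and $HH_{*,*}(\ZZ_{\K'})=0$ by Theorem~\ref{king boo}.

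The heart of the argument is a Mayer--Vietoris comparison carried out one full subcomplex at a time. For $J\subseteq[m]$ we have $\K_J=\K'_J\cup\langle\tau\cap J\rangle$ and $\K'_J\cap\langle\tau\cap J\rangle=\langle\sigma\cap J\rangle$, and $\langle\tau\cap J\rangle$, as well as $\langle\sigma\cap J\rangle$ whenever $\sigma\cap J\ne\emptyset$, is a non-empty (hence contractible) simplex. Running the reduced Mayer--Vietoris sequence I would show: the inclusion $\K'_J\hookrightarrow\K_J$ induces an isomorphism $\H_p(\K'_J)\xrightarrow{\ \sim\ }\H_p(\K_J)$ for every $p\ge 1$, it is injective on $\H_0$, and its cokernel on $\H_0$ is $\Z$ exactly when $\sigma\cap J=\emptyset$ while $A\cap J\ne\emptyset$ and $B\cap J\ne\emptyset$ — the case in which $\K_J$ is the disjoint union of the non-empty complex $\K'_J$ with the non-empty simplex $\langle\tau\cap J\rangle$ — and is $0$ in every other case (including $J=\emptyset$ and $\emptyset\ne J\subseteq B$, where $\K'_J$ and $\K_J$ agree, up to augmentation, and have vanishing reduced homology in degrees $\ge 0$).

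By the functoriality of the Hochster decomposition under simplicial inclusions these statements assemble into statements about $CH^{*}$. The cochain map $CH_n^{*}(\ZZ_{\K'})\to CH_n^{*}(\ZZ_{\K})$ induced by $\K'\hookrightarrow\K$ is an \emph{isomorphism} of cochain complexes for every $n\ge 2$, so $HH_n^{l}(\ZZ_{\K})\cong HH_n^{l}(\ZZ_{\K'})=0$ for all $l$; for $n=1$ it is injective and sits in a short exact sequence of cochain complexes
\[
0\to CH_1^{*}(\ZZ_{\K'})\to CH_1^{*}(\ZZ_{\K})\to E^{*}\to 0,\qquad
E^{l}=\bigoplus_{\substack{|J|=l,\ J\cap\sigma=\emptyset\\ A\cap J\ne\emptyset,\ B\cap J\ne\emptyset}}\Z,
\]
where $E^{*}$ carries the differential induced from $CH_1^{*}(\ZZ_{\K})$; in it the summands ``adjoin a vertex of $\sigma$'' drop out, because adjoining such a vertex merges the two components of $\K_J$ and pushes the extra $\H_0$-class into the image of $\H_0(\K'_{J\cup\{x\}})$. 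For $n=0$, since $\K$ has no ghost vertices the only $J$ with $\K_J$ void is $J=\emptyset$, so $CH_0^{*}(\ZZ_{\K})$ is $\Z$ in degree $l=0$ and $0$ otherwise, which already accounts for the class $HH_{0,0}(\ZZ_{\K})=\Z$ and leaves nothing else in that total degree.

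It then remains to compute $H^{*}(E^{*})$. Since $CH_1^{*}(\ZZ_{\K'})$ is acyclic (Theorem~\ref{king boo} again), the short exact sequence gives $HH_1^{l}(\ZZ_{\K})\cong H^{l}(E^{*})$. Now $E^{*}$ is, up to a global sign, the cochain complex spanned by the faces of the full simplex $\Delta$ on $A\sqcup B$ that meet both $A$ and $B$; equivalently it is the relative cochain complex of $\Delta$ modulo the subcomplex $\Delta_A\sqcup\Delta_B$. As $\Delta$ is acyclic and $\H^{q}(\Delta_A\sqcup\Delta_B)$ is $\Z$ for $q=0$ and $0$ otherwise, the long exact sequence forces the cohomology of $E^{*}$ to be $\Z$ in the degree of the straddling edges of $\Delta$ (one vertex in $A$, one in $B$) and $0$ in all other degrees, i.e.\ $H^{l}(E^{*})=\Z$ for $l=2$ and $0$ otherwise; so $HH_1^{2}(\ZZ_{\K})=\Z$, which is bidegree $(-1,4)$, and nothing else survives in degree $n=1$. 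Collecting the three ranges of $n$ yields $HH_{-k,2l}(\ZZ_{\K})=\Z$ at bidegrees $(0,0)$ and $(-1,4)$ and $0$ otherwise. The step I expect to be most delicate is the Mayer--Vietoris comparison: one must check that the isomorphisms and the cokernel computation are induced by the single simplicial inclusion $\K'\hookrightarrow\K$, so that functoriality of $CH$ applies and everything is a genuine map of cochain complexes, and one must pin down the differential on $E^{*}$ and the degree bookkeeping exactly, since it is precisely the shift that places the surviving class in cohomological degree $l=2$ — that is, in bidegree $(-1,4)$ — rather than elsewhere.
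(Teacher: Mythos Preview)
The paper does not actually prove this theorem: it is quoted from \cite{docoho} and used as a black box in the proof of the main Theorem~\ref{main}. So there is no in-paper proof to compare against directly. That said, your proposal is very much in the spirit of the paper's proof of Theorem~\ref{main}: both arguments set up a short exact sequence of $CH$-complexes via Mayer--Vietoris on full subcomplexes, both kill one term using the ghost-vertex vanishing of Theorem~\ref{king boo}, and both then identify the remaining term. The difference lies in the comparison target. The paper (for the general wedge-decomposable case) compares $\K$ to the auxiliary complex $\L=\gen{V(\K^1)}\sqcup_\sigma\gen{V(\K^2)}$ and then \emph{invokes} the cited Theorem~\ref{theoremsimplex} for $\L$; you instead compare $\K$ directly to $\K'$ and compute the quotient complex $E^{*}$ by hand as the relative cochain complex of the full simplex on $A\sqcup B$ modulo $\Delta_A\sqcup\Delta_B$. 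Your route is therefore self-contained and in particular does not rely on the result it is proving; in effect your $E^{*}$-computation is precisely the missing ingredient that the paper outsources to \cite{docoho}.

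Your argument is correct as outlined. Two places deserve the care you flag at the end. First, the functoriality step---that the level-wise maps $\H_{n-1}(\K'_J)\to\H_{n-1}(\K_J)$ assemble into a cochain map---is exactly what the large commutative diagrams in the paper's proof of Theorem~\ref{main} establish, and the same verification applies verbatim here. Second, the identification of the differential on $E^{*}$: adjoining $x\in\sigma$ really does kill the cokernel class (since $\K_{J\cup\{x\}}$ becomes connected through $x$), and adjoining $x\in A\cup B$ sends the generator $[b]-[a]$ of the cokernel to the corresponding generator upstairs, so the induced differential is, up to the global sign $(-1)^{p-1}$, the standard coboundary on subsets of $A\sqcup B$ meeting both parts. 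The degree bookkeeping ($l$-element subsets correspond to $(l{-}1)$-simplices) then places the surviving $\Z$ at $l=2$, i.e.\ bidegree $(-1,4)$, as you claim.
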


 Notice that in the previous theorem, if $\K$ is not a simplex, then $\K=\K'\sqcup_\sigma \gen{\tau}$ is wedge-decomposable. Also, it is important to note that not all simplicial complexes with double homology in bidegrees $(0,0)$ and $(-1,4)$ can be constructed by attaching a simplex to a simplicial complex.\\
\begin{eg}
    \normalfont {Consider the simplicial complex on $[4]$ given by $\K=\gen{\{1,2\},\{1,3\},\{2,3\},\{2,4\},\{3,4\}}$ and depicted in Figure 1.\\
    
    \begin{figure}[h]
        \centering
        \includegraphics[scale=0.25]{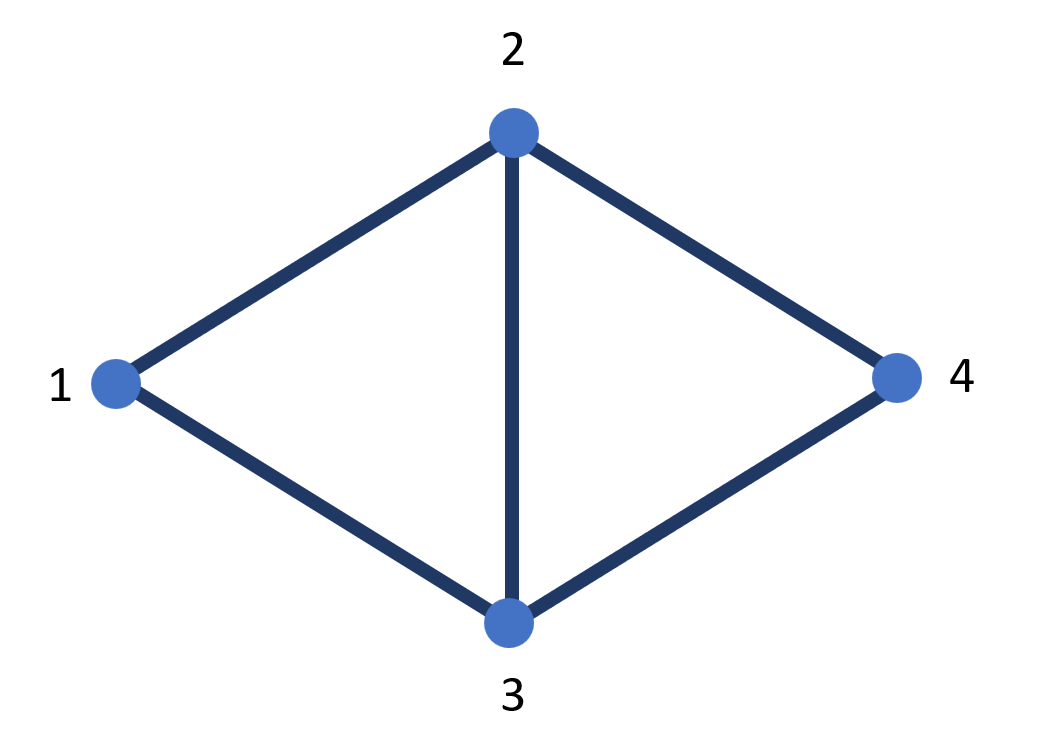}
        \caption{Geometric representation of $\K$.}
    \end{figure}
    \noindent Here the only non-zero reduced homology groups of full subcomplexes are
    \begin{align*}
        \H_{-1}(\K_\emptyset)\cong \Z\text{, }\H_{0}(\K_{\{1,4\}})\cong \Z \text{, }\H_1(\K_{\{1,2,3\}})\cong \Z\{1,2,3\}\text{, }\\
        \H_1(\K_{\{2,3,4\}})\cong \Z\{2,3,4\}\text{, and }\H_1(\K_{[m]})\cong\Z\{1,2,3\}\oplus\Z\{2,3,4\}.
    \end{align*}
    Therefore the homology of $\ZZ_\K$ is given by
    \begin{align*}
        H_{-k,2l}(\ZZ_\K)\cong \left\{\begin{array}{cl}
             \Z&\text{ for }(0,0) \text{ and }(-1,4)  \\
             \Z\{1,2,3\}\oplus\Z\{2,3,4\}& \text{ for }(-1,6) \text{ and }(-2,8)
        \end{array}\right.
    \end{align*}
    
    \noindent and so we automatically get that $HH_{0,0}(\ZZ_\K)\cong HH_{-1,4}(\ZZ_\K)\cong \Z$. The only nontrivial part of the cochain complex is 
    \[\begin{tikzcd}
	0 & {\Z\{1,2,3\}\oplus\Z\{2,3,4\}} & {\Z\{1,2,3\}\oplus\Z\{2,3,4\}} & 0
	\arrow[from=1-4, to=1-3]
	\arrow["d", from=1-3, to=1-2]
	\arrow[from=1-2, to=1-1]
\end{tikzcd}\]
    where $d$ is given by the matrix $\begin{bmatrix}
        -1 & 0 \\ 0 & 1
    \end{bmatrix}$ which is an isomorphism, and therefore $HH_{-1,6}(\ZZ_\K)=HH_{2,8}(\ZZ_\K)=0$. Here we have a simplicial complex with $HH$ of rank 2 concentrated in bidegrees $(0,0)$ and $(-1,4)$ that cannot be expressed as in Theorem \ref{theoremsimplex}. However, this is a wedge-decomposable simplicial complex as
    \[\K=\gen{\{1,2\},\{1,3\},\{2,3\}}\sqcup_{\{2,3\}}\gen{\{2,3\},\{2,4\},\{3,4\}}.\]
    }
\end{eg}
It turns out that wedge-decomposability is sufficient for the double homology to be concentrated in those degrees, see also \cite[ex. 6.7 and 6.8]{docoho}. To prove this we will use the following lemma.

\begin{lemma}\label{SES}
    Let $\K=\K^1\sqcup_\sigma \K^2$ be a wedge-decomposable simplicial complex on $[m]$ with no ghost vertices, if we denote by $\rho=V(\K^1)$, $\tau=V(\K^2)$  and let $\L=\gen{\rho}\sqcup_\sigma \gen{\tau}$, then for each non-empty $J\subseteq [m]$ and $n\geq 0$ there is a short exact sequence as follows
    \[\begin{tikzcd}
	0 & {\H_n(\K^1_J)\oplus\H_n(\K^2_J)} & {\H_n(\K_J)} & {\H_n(\L_J)} & 0.
	\arrow[from=1-2, to=1-3]
	\arrow[from=1-3, to=1-4]
	\arrow["{\phantom{}}", from=1-4, to=1-5]
	\arrow[from=1-1, to=1-2]
\end{tikzcd}\]
\end{lemma}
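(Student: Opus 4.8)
The plan is to exhibit the short exact sequence by comparing the reduced Mayer--Vietoris sequence that computes $\H_*(\K_J)$ from $\K^1_J$ and $\K^2_J$ with a model one that computes $\H_*(\L_J)$ from $\gen{\rho\cap J}$ and $\gen{\tau\cap J}$; the first map of the claimed sequence will be the one induced by the inclusions $\K^i\hookrightarrow\K$, and the second the one induced by the inclusion $\K\hookrightarrow\L$. First I would record the combinatorial input. Since no face of $\K^i$ can contain a ghost vertex, $\K^i\subseteq\gen{V(\K^i)}$; as forming full subcomplexes commutes with unions and intersections, $\K_J=\K^1_J\cup\K^2_J$, $\ \K^1_J\cap\K^2_J=(\K^1\cap\K^2)_J=\gen{\sigma\cap J}$, and $\K^i_J\subseteq\gen{V(\K^i)\cap J}$, so $\K_J\subseteq\gen{\rho\cap J}\cup\gen{\tau\cap J}=\L_J$; moreover $\gen{\rho\cap J}\cap\gen{\tau\cap J}=\gen{(\rho\cap\tau)\cap J}=\gen{\sigma\cap J}$, using that $\rho\cap\tau=\sigma$ (a vertex of both $\K^1$ and $\K^2$ lies in $\K^1\cap\K^2=\gen{\sigma}$). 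In particular the two inclusions $\K^i_J\hookrightarrow\gen{V(\K^i)\cap J}$ restrict to the identity on the common intersection $\gen{\sigma\cap J}$.

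By naturality of the reduced Mayer--Vietoris sequence (Theorem 2.1), these inclusions assemble into a commutative ladder whose rows are
\[\cdots\to\H_n(\gen{\sigma\cap J})\to\H_n(\K^1_J)\oplus\H_n(\K^2_J)\xrightarrow{j_*}\H_n(\K_J)\xrightarrow{\partial}\H_{n-1}(\gen{\sigma\cap J})\to\cdots\]
and
\[\cdots\to\H_n(\gen{\sigma\cap J})\to\H_n(\gen{\rho\cap J})\oplus\H_n(\gen{\tau\cap J})\to\H_n(\L_J)\xrightarrow{\partial'}\H_{n-1}(\gen{\sigma\cap J})\to\cdots,\]
whose vertical maps are the identity on the $\H_*(\gen{\sigma\cap J})$ columns, inclusion-induced on the $\H_*(\K^i_J)\to\H_*(\gen{V(\K^i)\cap J})$ columns, and equal to $\iota_*\colon\H_*(\K_J)\to\H_*(\L_J)$ on the last column; here $j_*$ and $\iota_*$ are precisely the two maps of the claimed short exact sequence.

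Now fix $n\ge 0$ and use that $\gen{\sigma\cap J}$, $\gen{\rho\cap J}$, $\gen{\tau\cap J}$ are simplices --- allowing possibly the empty simplex $\{\emptyset\}$, which has $\H_{-1}=\Z$ and $\H_q=0$ for $q\ge 0$ --- so all three have vanishing reduced homology in every degree $\ge 0$. In the top row $\H_n(\gen{\sigma\cap J})=0$, so $j_*$ is injective; in the bottom row $\H_n(\gen{\rho\cap J})\oplus\H_n(\gen{\tau\cap J})=0$, so $\partial'$ is injective. Commutativity gives $\partial=\partial'\circ\iota_*$, so $\ker\iota_*=\ker\partial=\im j_*$ by exactness of the top row, and the injectivity of $\partial'$ further reduces the surjectivity of $\iota_*$ to the equality $\im\partial=\im\partial'$. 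By exactness these two images equal, respectively, the kernels of $\H_{n-1}(\gen{\sigma\cap J})\to\H_{n-1}(\K^1_J)\oplus\H_{n-1}(\K^2_J)$ and of $\H_{n-1}(\gen{\sigma\cap J})\to\H_{n-1}(\gen{\rho\cap J})\oplus\H_{n-1}(\gen{\tau\cap J})$, and by commutativity the second of these maps factors through the first, so $\im\partial\subseteq\im\partial'$ automatically; it remains to prove the reverse inclusion.

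That reverse inclusion is the only delicate point, and it is vacuous unless $n=0$ and $\sigma\cap J=\emptyset$, since otherwise $\H_{n-1}(\gen{\sigma\cap J})=0$. In the remaining case $\H_{-1}(\gen{\sigma\cap J})=\Z$ and one compares two maps out of $\Z$ into $(-1)$-st reduced homologies; here the key point is that $\K^i_J$ is the empty space precisely when $V(\K^i)\cap J=\emptyset$, which is precisely when $\gen{V(\K^i)\cap J}$ is the empty space, and that when this happens $\K^i_J$ and $\gen{V(\K^i)\cap J}$ are literally the same complex $\{\emptyset\}$ --- so the comparison map $\H_{-1}(\K^1_J)\oplus\H_{-1}(\K^2_J)\to\H_{-1}(\gen{\rho\cap J})\oplus\H_{-1}(\gen{\tau\cap J})$ is injective and the two kernels coincide. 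I expect this degree-zero/empty-simplex bookkeeping to be the main obstacle, and it is exactly what the hypotheses that $\K^1,\K^2$ have no ghost vertices and that $J$ is non-empty are there to keep under control. Combining the injectivity of $j_*$, the identity $\ker\iota_*=\im j_*$, and the surjectivity of $\iota_*$ then yields the desired short exact sequence for every $n\ge 0$.
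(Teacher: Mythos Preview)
Your proof is correct and takes a genuinely different route from the paper's. The paper argues by cases: for $n\ge 1$ both $\H_n(\gen{\sigma}_J)$ and $\H_n(\L_J)$ vanish and the middle of the Mayer--Vietoris sequence for $(\K^1_J,\K^2_J)$ gives the result immediately, while for $n=0$ it splits further according to whether $J\cap\sigma$ is empty and, in the hardest subcase ($J\cap\sigma=\emptyset$ with $J$ meeting both $\rho\setminus\sigma$ and $\tau\setminus\sigma$), writes down explicit bases for all four $\H_0$ groups in terms of chosen vertices in path components and verifies exactness by hand. Your argument is more uniform and conceptual: by comparing the two Mayer--Vietoris sequences for $\K_J=\K^1_J\cup\K^2_J$ and $\L_J=\gen{\rho\cap J}\cup\gen{\tau\cap J}$ via naturality, you reduce everything to the single observation that the comparison map $\H_{-1}(\K^i_J)\to\H_{-1}(\gen{V(\K^i)\cap J})$ is an isomorphism, which handles the surjectivity of $\iota_*$ in degree $0$ without any explicit basis computation. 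The paper's approach is more elementary and self-contained; yours is shorter and makes the role of $\L$ more transparent.

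One minor slip worth flagging: you refer to ``the hypotheses that $\K^1,\K^2$ have no ghost vertices'', but in fact $\K^1$ and $\K^2$, regarded as complexes on $[m]$, \emph{do} have ghost vertices --- the paper even exploits this later via Theorem~\ref{king boo}. The hypothesis is only that $\K$ has no ghost vertices. Fortunately your argument never actually uses the misstated hypothesis: the equivalence $\K^i_J=\{\emptyset\}\Longleftrightarrow V(\K^i)\cap J=\emptyset$ is immediate from the definition of the effective vertex set $V(\K^i)$, and that is all you need for the $\H_{-1}$ comparison. So the proof stands as written once that sentence is corrected.
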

\begin{proof}
       Consider the sequence
			\begin{equation}\label{SES1}
		\begin{tikzcd}
			{\H_n(\gen{\sigma}_J)} & {\H_n(\K^1_J)\oplus\H_n(\K^2_J)} & {\H_n(\K_J)} & {\H_n(\L_J)} & 0
			\arrow["{\psi}", from=1-2, to=1-3]
			\arrow["\Phi", from=1-3, to=1-4]
			\arrow["{\phantom{}}", from=1-4, to=1-5]
			\arrow["{\varphi}", from=1-1, to=1-2]
		\end{tikzcd}\end{equation}
		  where $\varphi(\overline{z})=(\overline{z},-\overline{z})$,  $\psi(\overline{x},\overline{y})=\overline{x+y}$ and $\Phi$ is induced by the inclusion $\K_J\xhookrightarrow{}\L_J$ respectively; we claim sequence \eqref{SES1} is exact.\\

        For $n\geq 1$ this sequence is just the Mayer-Vietoris sequence for the pair $\K^1_J$ and $\K^2_J$  as $\H_n(\gen{\sigma}_J)=\H_n(\L_J)=0$, so we can assume $n=0$. Notice that Sequence \ref{SES1} is always exact at $\H_0(\K^1_J)\oplus \H_0(\K^2_J)$ as 
        \[(x,y)\in\ker{\psi}\iff \overline{x}=-\overline{y}\iff (\overline{x},\overline{y})=(\overline{x},-\overline{x})=\varphi(\overline{x})\in \text{Im }\varphi.\]

	If $J\cap \sigma\neq\emptyset$, then $\L_J$ has only one path-connected component and therefore $\H_0(\L_J)=0$. Using the Mayer-Vietoris exact sequence of the pair $\K^1_J$, $\K^2_J$, as $\K^1_J\cap\K^2_J=\gen{\sigma}_J\neq\emptyset$ then $\H_{-1}(\gen{\sigma}_J)=0$ and so we have the exact sequence
    \[\begin{tikzcd}
	   {\H_0(\K^1_J)\oplus\H_0(\K^2_J)} & {\H_0(\K_J)} & {\H_{-1}(\gen{\sigma}_J)}. 
	   \arrow["\psi", from=1-1, to=1-2]
	   \arrow["0", from=1-2, to=1-3]
    \end{tikzcd}\]
       Therefore $\psi$ is surjective, verifying exactness in this case.\\
        
		Now suppose $J\cap \sigma=\emptyset$, which implies $\gen{\sigma}_J=\emptyset$. We have three cases:
    \begin{itemize}
        \item ($J\subseteq \rho$) In this case $\K_J=\K^1_J$, $\L_J=\gen{\rho}_J$, and $\K^2_J=\emptyset$, meaning that $\H_0(\gen{\sigma}_J)=\H_0(\gen{\rho}_J)=\H_0(\K^2_J)=\H_0(\emptyset)=0$ and therefore the sequence (\ref{SES1}) becomes
        \[\begin{tikzcd}
	   {\H_0(\gen{\sigma}_J)} & {\H_0(\K^1_J)}\oplus 0 & {\H_0(\K^1_J)} & {\H_0(\gen{\rho}_J)} & 0
	   \arrow["0", from=1-1, to=1-2]
	   \arrow["1", from=1-2, to=1-3]
	   \arrow["0", from=1-3, to=1-4]
	   \arrow[from=1-4, to=1-5]
    \end{tikzcd}\]
which is exact. 
        \item ($J\subseteq \tau$) This case is analogous to the previous one switching the places of $\rho$ and $\tau$.
        \item ($J\cap\rho \neq \emptyset\neq J\cap \tau$) Let $n_1$ be the number of path components of $\K^1_J$. For each path component $V_i$ of $\K^1_J$ we choose a vertex $x_i\in V_i$, obtaining this way a basis for $H_0(\K^1_J)$, namely $\{\overline{x}_i\}_{i\in [n_1]}$. Through an analogous process, if $n_2$ is the number of path components of $\K^2_J$ we have the basis $\{\overline{y}_i\}_{i\in [n_2]}$ for $H_0(\K^2_J)$. Similarly, we take $x\in \gen{\rho}_J$ and $y\in \gen{\tau}_J$, as $\L_J$ has only two path components and both $x$ and $y$ correspond to different ones we obtain that $\{\overline{x},\overline{y}\}$ is a basis for $H_0(\L_J)$. This way we can present the reduced homology groups as
            \begin{align*}
			\H_0(\K^1_J)&\cong \gen{\overline{x_i-x_1}: i\in [2,n_1] }\\
			\H_0(\K^2_J)&\cong \gen{\overline{y_i-y_1}:i\in [2,n_2]}\\
			\H_0(\K_J)&\cong \gen{\overline{x_i-x_1},\overline{y_j-x_1}:\; i\in [2,n_1],\;j\in [n_2] }\\
			\H_0(\L_J)&\cong \gen{\overline{y-x}}.
		\end{align*}
  Therefore we have that \begin{align*}
			\Phi(\overline{x_i-x_1})=0\\
			\Phi(\overline{y_i-x_1})=\overline{y-x},
		\end{align*}
		and so $\Phi$ is surjective with kernel
            \[\text{Ker } (\Phi)=\gen{\overline{x_i-x_1}, \left(\overline{y_j-x_1}\right)-\left(\overline{y_1-x_1}\right):i\in [2,n_1],\;j\in [2,n_2]},\]
            meaning that the sequence is exact at $\H_0(\L_J)$.\\
        \noindent On the other hand, we have that 
		\begin{align*}
			\psi(\overline{x_i-x_1})&=\overline{x_i-x_1}\\
			\psi(\overline{y_j-y_1})&=\left(\overline{y_j-x_1}\right)-\left(\overline{y_1-x_1}\right)
		\end{align*}
    	therefore \[\text{Im } (\psi)=\gen{\overline{x_i-x_1}, \left(\overline{y_j-x_1}\right)-\left(\overline{y_1-x_1}\right):i\in [2,n_1],\;j\in [2,n_2]},\]
        meaning that the sequence is exact at $\H_0(\K_J)$.\\
        As $\gen{\sigma}$ is a simplex, so is $\gen{\sigma}_J$ and therefore $\H_0(\gen{\sigma}_J)=0$ completing the proof.\qedhere
    \end{itemize}
\end{proof}

\begin{theorem}\label{main}
	Let $\K$ be a wedge decomposable simplicial complex on $[m]$ with no ghost vertices, then 
\[
	HH_{n}^l(\ZZ_\K)\cong\left\{\begin{array}{cl}
		\Z & \text{ for $(n,l)=(0,0)$ and $(1,2)$}\\
		0 & \text{ else.}
	\end{array}\right.\]
    Or equivalently
    \[HH_{-k,2l}(\ZZ_\K)\cong\left\{\begin{array}{cl}
		\Z & \text{ for $(-k,2l)=(0,0)$ and $(-1,4)$}\\
		0 & \text{ else.}
	\end{array}\right. \]
\end{theorem}
\begin{proof}
Let $\K=\K^1\sqcup_\sigma \K^2$ and assume $\K$ has no ghost vertices. Proposition \ref{boo} proves the case when $l=0$, so we shall assume $l>0$. As $\K$ has no ghost vertices we can assume $n>0$.\\

Let $J\subseteq[m]$ where $|J|=l$, we define $\L=\gen{V(\K^1)}\sqcup_\sigma\gen{V(\K^2)}$. Notice that $\L_{[m]\setminus\sigma}$ has two connected components, then $\L$ is not a simplex. By Lemma \ref{SES}, there is a short exact sequence
            \begin{equation}\label{ses}
                \begin{tikzcd}
	0 & {\H_{n-1}(\K^1_J)\oplus\H_{n-1}(\K^2_J)} & {\H_{n-1}(\K_J)} & {\H_{n-1}(\L_J)} & 0.
	\arrow[from=1-2, to=1-3]
	\arrow[from=1-3, to=1-4]
	\arrow[from=1-4, to=1-5]
	\arrow[from=1-1, to=1-2]
\end{tikzcd}
            \end{equation}    
    We will show these exact sequences induce a short exact sequence at the level of $CH_n^*$. Let $x\in [m]\setminus J$, consider the commutative diagrams
    \[\begin{tikzcd}
	{\K^1_J} & {\K_J} & {\L_J} \\
	{\K^1_{J\cup\{x\}}} & {\K_{J\cup\{x\}}} & {\L_{J\cup\{x\}}}
	\arrow["i",hook, from=1-1, to=1-2]
	\arrow["h",hook, from=1-2, to=1-3]
	\arrow[hook, from=1-1, to=2-1]
	\arrow["i",hook, from=2-1, to=2-2]
	\arrow["h",hook, from=2-2, to=2-3]
	\arrow[hook, from=1-3, to=2-3]
	\arrow[hook, from=1-2, to=2-2]
\end{tikzcd}\begin{tikzcd}
	{\K^2_J} & {\K_J} & {\L_J} \\
	{\K^2_{J\cup\{x\}}} & {\K_{J\cup\{x\}}} & {\L_{J\cup\{x\}}}
	\arrow["j", hook, from=1-1, to=1-2]
	\arrow["h", hook, from=1-2, to=1-3]
	\arrow[hook, from=1-1, to=2-1]
	\arrow["j",hook, from=2-1, to=2-2]
	\arrow["h", hook, from=2-2, to=2-3]
	\arrow[hook, from=1-3, to=2-3]
	\arrow[hook, from=1-2, to=2-2]
\end{tikzcd}\]
with all the arrows being the canonical inclusions. As $\H$ is functorial with respect to inclusions, then for each $n\geq 0$ we get the following commutative diagrams
\[\begin{tikzcd}
	{\H_{n-1}(\K^1_J)} & {\H_{n-1}(\K_J)} & {\H_{n-1}(\L_J)} \\
	{\H_{n-1}(\K^1_{J\cup\{x\}})} & {\H_{n-1}(\K_{J\cup\{x\}})} & {\H_{n-1}(\L_{J\cup\{x\}}),}
	\arrow["{i_*}", from=1-1, to=1-2]
	\arrow["{h_*}", from=1-2, to=1-3]
	\arrow["{\phi^{\K^1}_{n-1;J,x}}"', from=1-1, to=2-1]
	\arrow["{i_*}"',  from=2-1, to=2-2]
	\arrow["{h_*}"',  from=2-2, to=2-3]
	\arrow["{\phi^{\L}_{n-1;J,x}}", from=1-3, to=2-3]
	\arrow["{\phi^{\K}_{n-1;J,x}}"', from=1-2, to=2-2]
\end{tikzcd}\]
    \[\begin{tikzcd}
	{\H_{n-1}(\K^2_J)} & {\H_{n-1}(\K_J)} & {\H_{n-1}(\L_J)} \\
	{\H_{n-1}(\K^2_{J\cup\{x\}})} & {\H_{n-1}(\K_{J\cup\{x\}})} & {\H_{n-1}(\L_{J\cup\{x\}}).}
	\arrow["{j_*}", from=1-1, to=1-2]
	\arrow["{h_*}", from=1-2, to=1-3]
	\arrow["{\phi^{\K^2}_{n-1;J,x}}"', from=1-1, to=2-1]
	\arrow["{j_*}"',  from=2-1, to=2-2]
	\arrow["{h_*}"',  from=2-2, to=2-3]
	\arrow["{\phi^{\L}_{n-1;J,x}}", from=1-3, to=2-3]
	\arrow["{\phi^{\K}_{n-1;J,x}}"', from=1-2, to=2-2]
\end{tikzcd}\]
 As $\oplus$ is a biproduct in $\Z$\textbf{-mod} we can combine these diagrams, and while we are at it, we can multiply by the sign $(-1)^{n}\varepsilon(x,J)$ all columns without affecting the commutativity. This way we obtain the commutative diagram
\[\hspace{-5mm}\begin{tikzcd}
	& {\H_{n-1}(\K^1_J)\oplus\H_{n-1}(\K^2_J)} & {\H_{n-1}(\K_J)} & {\H_{n-1}(\L_J)}& \\
	& {\H_{n-1}(\K^1_{J\cup\{x\}})}\oplus {\H_{n-1}(\K^2_{J\cup\{x\}})} & {\H_{n-1}(\K_{J\cup\{x\}})} & {\H_{n-1}(\L_{J\cup\{x\}}).}&
	\arrow["{i_*+j_*}", from=1-2, to=1-3]
	\arrow["{h_*}", from=1-3, to=1-4]
	\arrow["{(-1)^{n}\varepsilon(x,J)\phi^{\K^1}_{n-1;J,x}\oplus\phi^{\K^2}_{n-1;J,x}}"', from=1-2, to=2-2]
	\arrow["{i_*+j_*}"',  from=2-2, to=2-3]
	\arrow["{h_*}"',  from=2-3, to=2-4]
	\arrow["{(-1)^{n}\varepsilon(x,J)\phi^{\L}_{n-1;J,x}}", from=1-4, to=2-4]
	\arrow["{(-1)^{n}\varepsilon(x,J)\phi^{\K}_{n-1;J,x}}"', from=1-3, to=2-3]
\end{tikzcd}\]
    As $\oplus$ is a product, we take direct sum over all $x\in [m]\setminus J$. Also, as the horizontal maps are precisely the same as those in Sequence \eqref{SES1} we get the following commutative diagram with exact rows
    \[\adjustbox{scale=0.9}{\begin{tikzcd}
	0& {\H_{n-1}(\K^1_J)\oplus\H_{n-1}(\K^2_J)} & {\H_{n-1}(\K_J)} & {\H_{n-1}(\L_J)}&0 \\
	0& \bigoplus\limits_{\substack{x\in [m]\setminus J}}\H_{n-1}(\K^1_{J\cup\{x\}}) \oplus \H_{n-1}(\K^2_{J\cup\{x\}})& \bigoplus\limits_{\substack{x\in [m]\setminus J}}{\H_{n-1}(\K_{J\cup\{x\}})} & \bigoplus\limits_{\substack{x\in [m]\setminus J}}{\H_{n-1}(\L_{J\cup\{x\}})}&0
    \arrow[from=1-1,to=1-2]
	\arrow["{\psi}", from=1-2, to=1-3]
	\arrow["{\Phi}", from=1-3, to=1-4]
    \arrow[from=1-4,to=1-5]
	\arrow["d_{n-1;J}\oplus d_{n-1;J}"', from=1-2, to=2-2]
    \arrow[from=2-1,to=2-2]
	\arrow["{\psi}"',  from=2-2, to=2-3]
	\arrow["{\Phi}"',  from=2-3, to=2-4]
    \arrow[from=2-4,to=2-5]
	\arrow["d_{n-1;J}"', from=1-4, to=2-4]
	\arrow["d_{n-1;J}"', from=1-3, to=2-3]
\end{tikzcd}}\]
    where the vertical maps are defined as in Equation \eqref{partiald}. We can think of the bottom row as taking the direct sum indexed over all subsets of $[m]$ of size $l+1$ that contain $J$; we can extend this sum by inclusion to all subsets of size $l+1$ while maintaining both commutativity and exactness:
    \[\adjustbox{scale=1}{\begin{tikzcd}
	0& {\H_{n-1}(\K^1_J)\oplus\H_{n-1}(\K^2_J)} & {\H_{n-1}(\K_J)} & {\H_{n-1}(\L_J)}&0 \\
	0& \bigoplus\limits_{\substack{I\subseteq [m]\\|I|=l+1}}\H_{n-1}(\K^1_{I})\oplus \bigoplus\limits_{\substack{I\subseteq [m]\\|I|=l+1}}\H_{n-1}(\K^2_{I}) & \bigoplus\limits_{\substack{I\subseteq [m]\\|I|=l+1}}{\H_{n-1}(\K_{I})} & \bigoplus\limits_{\substack{I\subseteq [m]\\|I|=l+1}}{\H_{n-1}(\L_{I})}&0.
    \arrow[from=1-1,to=1-2]
	\arrow["{\psi}", from=1-2, to=1-3]
	\arrow["{\Phi}", from=1-3, to=1-4]
    \arrow[from=1-4,to=1-5]
	\arrow["d_{n-1;J}\oplus d_{n-1;J}"', from=1-2, to=2-2]
    \arrow[from=2-1,to=2-2]
	\arrow["{\psi}"',  from=2-2, to=2-3]
	\arrow["{\Phi}"',  from=2-3, to=2-4]
    \arrow[from=2-4,to=2-5]
	\arrow["d_{n-1;J}"', from=1-4, to=2-4]
	\arrow["d_{n-1;J}"', from=1-3, to=2-3]
\end{tikzcd}}\]
    As $\oplus$ is a coproduct in $\Z$\textbf{-mod}, we can take the direct sum over all possible $J$. Therefore from the definition of $CH_n^*(\ZZ_\K)$ we obtain the following commutative diagram with exact rows
 \[\adjustbox{scale=1}{\begin{tikzcd}
	0& CH_n^l(\ZZ_{\K^1})\oplus CH_n^l(\ZZ_{\K^2}) & CH_n^l(\ZZ_{\K}) & CH_n^l(\ZZ_{\L})&0 \\
	0& CH_n^{l+1}(\ZZ_{\K^1})\oplus CH_n^{l+1}(\ZZ_{\K^2}) & CH_n^{l+1}(\ZZ_{\K}) & CH_n^{l+1}(\ZZ_{\L})&0.
    \arrow[from=1-1,to=1-2]
	\arrow["{\psi}", from=1-2, to=1-3]
	\arrow["{\Phi}", from=1-3, to=1-4]
    \arrow[from=1-4,to=1-5]
	\arrow[""', from=1-2, to=2-2]
    \arrow[from=2-1,to=2-2]
	\arrow["{\psi}"',  from=2-2, to=2-3]
	\arrow["{\Phi}"',  from=2-3, to=2-4]
    \arrow[from=2-4,to=2-5]
	\arrow["", from=1-4, to=2-4]
	\arrow[""', from=1-3, to=2-3]
\end{tikzcd}}\]
    
\noindent As this holds for every $l>0$ this induces a short exact sequence of truncated cochain complexes
            \[\begin{tikzcd}
	0 & {CH_n^{*}(\ZZ_{\K^1})\oplus CH_n^{*}(\ZZ_{\K^2})} & {CH_n^{*}(\ZZ_{\K})} & {CH_n^{*}(\ZZ_{\L})} & 0.
	\arrow[from=1-2, to=1-3]
	\arrow[from=1-1, to=1-2]
	\arrow[from=1-3, to=1-4]
	\arrow[from=1-4, to=1-5]
\end{tikzcd}\]
Therefore, for $l>0$ we get the following exact sequence in cohomology
\begin{equation} \label{finalles}
    \adjustbox{scale=0.9}{\begin{tikzcd}
	{HH_n^l(\ZZ_{\K^1})\oplus HH_n^l(\ZZ_{\K^2})} & {HH_n^l(\ZZ_{\K})} & {HH_n^{l+1}(\ZZ_{\L})} & HH_n^{l+1}(\ZZ_{\K^1})\oplus HH_n^{l+1}(\ZZ_{\K^2}).
	\arrow[from=1-1, to=1-2]
	\arrow[from=1-2, to=1-3]
	\arrow[from=1-3, to=1-4]
\end{tikzcd}}
\end{equation}
However, as $\sigma$ is a proper face of both $\K^1$ and $\K^2$ and each of them are non-empty, there exist $x,y\in [m]$ such that $\{x\}\notin \K^1$ and $\{y\}\notin \K^2$; that is, both $\K^1$ and $\K^2$ have ghost vertices. Applying Theorem \ref{king boo}, \[HH_{*,*}(\ZZ_{\K^1})=HH_{*,*}(\ZZ_{\K^2})=0.\] Therefore the map $HH_n^l(\ZZ_\K)\to HH_n^{l}(\ZZ_\L)$ in Sequence \eqref{finalles} is an isomorphism. Finally, as $\L$ satisfies the hypotheses in Theorem \ref{theoremsimplex} it follows that
\begin{align*}
    HH_n^l(\ZZ_\K)\cong HH_n^l(\ZZ_\L)\cong HH_{-(l-n),2l}(\ZZ_\L)&\cong \left\{\begin{array}{cl}
    \Z  & \text{for $(-(l-n),2l)=(0,0)$ or $(-1,4)$} \\
     0  & \text{else}
\end{array}\right.\\
&\cong \left\{\begin{array}{cl}
    \Z  & \text{for $(n,l)=(0,0)$ or $(1,2)$} \\
     0  & \text{else,}
\end{array}\right.
\end{align*}
completing the proof.
\end{proof}
\begin{eg}\label{counter}
    \normalfont{Consider the simplicial complex on $[5]$ given by $\L=\gen{\{1,2,3\},\{1,3,4\},\{1,4,5\},\{2,3,4\},\{2,4,5\}}$ depicted in Figure 2 below.\\
    \begin{figure}[h]
        \centering
        \includegraphics[scale=0.35]{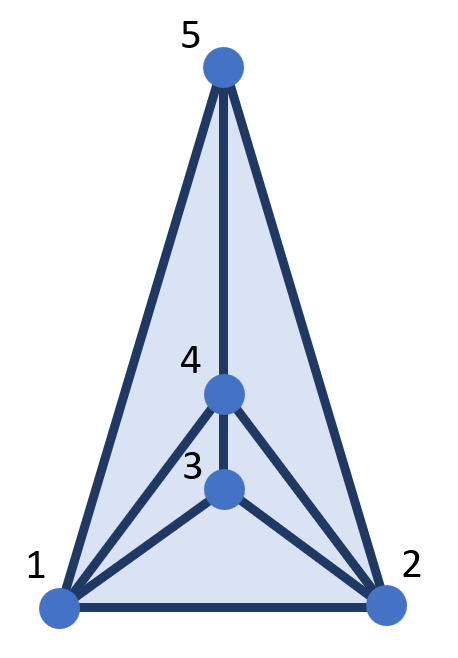}
        \caption{Geometric representation of $\L$.}
    \end{figure}

\noindent The only nontrivial reduced homology groups of full subcomplexes are
    \begin{align*}
        &\H_{-1}\left(\L_\emptyset\right)\cong\Z\text{, }&\H_0\left(\L_{\{3,5\}}\right)\cong\Z&,\\
        &\H_{1}\left(\L_{\{1,2,4\}}\right)=\Z\{1,2,4\}\text{, }&\H_1\left(\L_{\{1,2,5\}}\right)=\Z\{1,2,5\}&,\\
        &\H_{1}\left(\L_{\{1,2,4,5\}}\right)=\Z\{1,2,4,5\}\text{, }&\H_{1}\left(\L_{\{1,2,3,5\}}\right)=\Z\{1,2,3,5\}&.
    \end{align*}
    Therefore, from Hochster's formula
    \[H_{-k,2l}(\ZZ_\L)\cong\left\{\begin{array}{cl}
        \Z & \text{for $(0,0)$ and $(-1,4)$}, \\
        \Z\{1,2,4\} \oplus\Z\{1,2,5\}& \text{for $(-1, 6)$},\\
        \Z\{1,2,3,4\}\oplus \Z\{1,2,4,5\}& \text{for $(-2, 8)$},\\
        0&\text{else.}
    \end{array}\right.\]
    And so we automatically get that $HH_{0,0}(\ZZ_\L)\cong HH_{-1,4}(\ZZ_\L)\cong\Z$. The only nontrivial part of the cochain complex is 
     \[\begin{tikzcd}
	0 & {\Z\{1,2,3,4\}\oplus\Z\{1,2,4,5\}} & {\Z\{1,2,4\}\oplus\Z\{1,2,5\}} & 0
	\arrow[from=1-4, to=1-3]
	\arrow["d", from=1-3, to=1-2]
	\arrow[from=1-2, to=1-1]
\end{tikzcd}\]
where $d$ is given by the matrix $\begin{bmatrix}
    -1&1\\
    0&1
\end{bmatrix}$, which is an isomorphism. So we can conclude $HH_{-k,2l}(\ZZ_\L)$ is as in Theorem \ref{main}. However, $\L$ is not wedge decomposable. To see this, notice that for a wedge-decomposable complex $\K^1\sqcup_\sigma\K^2$, the vertices in $\K^1\setminus\gen{\sigma}$ cannot be adjacent to those in $\K^2\setminus\gen{\sigma}$; so if we could write $\L=\K^1\sqcup_\sigma\K^2$, since $1,2$ and $4$ are adjacent to all vertices, they would have to be in $\sigma$, but $\{1,2,4\}$ is not a simplex of $\L$.}
\end{eg}

\bibliographystyle{alpha}
\bibliography{ref.bib}
\end{document}